\setlist[enumerate]{label={\rm(\roman*)}}
\theoremstyle{plain}
\newtheorem{theorem}{Theorem}[section]
\newtheorem*{theorem-A}{Theorem~A}
 \theoremstyle{definition}
 \newtheorem{remark}[theorem]{Remark}
\numberwithin{theorem}{section}
\numberwithin{equation}{section}
\let\expandafter\oldproof\csname\string\proof\endcsname
\let\oldendproof\endproof
\renewenvironment{proof}[1][\proofname]{%
  \oldproof[\bf #1]%
}{\oldendproof}
\def\urladdrname{{\texttt{ORCiD~}}}
\def\Z{\mathbb Z}
\def\esssup
\date{\today}
\begin{document}

\title{The Persson--Stepanov theorem revisited}

\author{Amiran Gogatishvili, Lubo\v s Pick, Hana Tur\v cinov\'a, and Tu\u{g}\c{c}e \"{U}nver}

\address{Amiran Gogatishvili, Institute of Mathematics of the Czech
 Academy of Sciences,
 \v Zitn\'a~25,
 115~67 Praha~1,
 Czech Republic,
 \urladdrname{0000-0003-3459-0355}}
\email{gogatish@math.cas.cz}

\address{Lubo\v s Pick, De\-pa\-rtment of Mathematical Analysis, Faculty of Mathematics and
Physics, Cha\-rles University, Sokolovsk\'a~83,
186~75 Praha~8, Czech Republic, 
\urladdrname{0000-0002-3584-1454} 
}
\email{pick@karlin.mff.cuni.cz}

\address{Hana Tur\v cinov\'a,
 Czech Technical University in Prague, Faculty of Electrical Engineering, Department of Mathematics, Technick\'a~2, 166~27 Praha~6, Czech Republic, \urladdrname{0000-0002-5424-9413}}
\email{hana.turcinova@fel.cvut.cz}

\address{Tu\u{g}\c{c}e \"{U}nver, Faculty of Engineering and Natural Sciences, Department of Mathematics,
Kiri\-kkale University, 71450, Yahsihan, Kirikkale, T\"{u}rkiye, \urladdrname{0000-0003-0414-8400} }
\email{tugceunver@kku.edu.tr}

\keywords{Persson--Stepanov theorem, elementary proof, weighted Hardy inequality}

\subjclass[2010]{26D15, 46E30, 47G10}

\begin{abstract} 
    We develop a new proof of the result of L.-E.~Persson and V.D.~Stepanov \cite[Theorems 1 and 3]{Per:02}, which provides a characterization of a Hardy integral inequality involving two weights, and which can be applied to an effective treatment of the geometric mean operator. Our approach enables us to extend their result to the full range of parameters, in particular involving the critical case $p=1$, which was excluded in the original work. Our proof avoids all duality steps and discretization techniques and uses solely elementary means.
\end{abstract}

\thanks{This research was supported in part by grant no.~23-04720S of the Czech Science Foundation.
The research of  A.~Gogatishvili 
was partially supported by the Institute of Mathematics, CAS is supported by RVO:67985840, by  Shota Rustaveli National Science Foundation (SRNSF), grant no: FR22-17770.
 Part of this work was completed during A.~Gogatishvili's visit to Kirikkale University with the support of TUBITAK 2221 program Project No:  1059B212400044. }

\maketitle

\noindent\textbf{Dedication:
    The paper is a tribute to our dear friend Lars--Erik Persson at the occasion of his 80th birthday.}

\section{Introduction and the main results}

In this paper, we recall the result of L.-E.~Persson and V.D.~Stepanov stated in~\cite[Theorems 1 and 3]{Per:02}, in which a remarkable new characterization of the notorious two-weight Hardy inequality for functions defined on an interval was obtained. 

The Hardy inequality in question states that, given two parameters, $p$ and $q$, satisfying $1\le p, q<\infty$, and a pair of \emph{weights} (measurable functions which are positive and finite almost everywhere) $v$ and $w$ defined on an open interval $(a,b)$ (bounded or unbounded at each endpoint), then, under appropriate restrictions on $p,q,v,w$, there exists a positive constant $C$ such that the inequality
\begin{equation}\label{E:Hardy}
    \left(\int_a^b \left(\int_a^{t}f\right)^{q} w(t) \, dt\right)^{\frac{1}{q}} \leq C\left(\int_a^b f^{p}v\right)^{\frac{1}{p}}
\end{equation}
holds for all nonnegative measurable functions $f$. Here $C$ is allowed to depend on $p,q,v$ and $w$ but has to remain independent of $f$.

A characterization of the inequality~\eqref{E:Hardy}, together with its endless supply of modifications, generalizations and extensions, had been known, of course, long before, thanks to the efforts of many authors.  The history of pursuing inequality is over one century long and begins with classical 1920's results of G.H.~Hardy~\cite{Hardy:20}. In the 1950's, I.S.~Kac and M.G.~Kre\u{\i}n considered a special case of the weighted version~\cite{KacKre:58}. In the 1960's various observations were added by P.R.~Beesack~\cite{Bee:61}. Later, the so-called `convex case' ($p\le q$) was studied extensively~\cite{Tom:69,Tal:69,Muc:72,Bra:78,Kok:79,Rie:74} including various untitled and unpublished manuscripts such as those by M.~Artola and by D.W.~Boyd and J.A.~Erd\H{o}s. 

To summarize the situation in the convex case,~\eqref{E:Hardy} holds if and only if
\begin{equation*}
\sup_{t\in(a,b)}\left(\int_{t}^{b}w\right)^{\frac{1}{q}}
        \left(\int_{a}^{t}v^{1-p'}\right)^{\frac{1}{p'}}<\infty \quad\text{if $1<p\le q$}
\end{equation*}
and
\begin{equation*}
\sup_{t\in(a,b)}\left(\int_{t}^{b}w\right)^{\frac{1}{q}}
        \esssup_{s\in(a,t)}\frac{1}{v(s)}<\infty \quad\text{if $1=p\le q$.}
\end{equation*}
Here and throughout, if $p\in[1,\infty]$, then $p'$ denotes the conjugate exponent defined by $\frac{1}{p}+\frac{1}{p'}=1$. Observe that $1$ and $\infty$ are taken for conjugate exponents.

The `non-convex case' ($p>q$) was treated separately, and the results can be found, for example, in~\cite{Maz:11,Saw:84,Sin:91,SiSt:96,BeGr:06}. Here, for $1\le q<p<\infty$, the necessary and sufficient condition is, 
\begin{equation*}
\int_{a}^{b}\left(\int_{t}^{b}w\right)^{\frac{r}{q}}\left(\int_{a}^{t}v^{1-p'}\right)^{\frac{r}{q'}}v(t)^{1-p'}\,dt<\infty,
\end{equation*}
where $r=\frac{pq}{p-q}$, with appropriate modifications in the remaining cases.

The principal mission of L.-E.~Persson and V.D.~Stepanov in~\cite{Per:02} was to obtain a new characterization of a two-weight inequality for the~\emph{geometric mean operator}, $G$, defined at any admissible function $f$ on $(0,\infty)$ by
\begin{equation}\label{E:geometric}
    Gf(t) = \exp\left(\frac{1}{t}\int_{0}^{t}\log f(s)\,ds\right)
    \quad\text{for $t\in(0,\infty)$.}
\end{equation}
More precisely, they were searching for a condition that would characterize an inequality involving $G$ by means of constants having better stability properties than those known from the scheme developed earlier in~\cite{Pic:94}.
Investigation of weighted inequalities involving the operator~\eqref{E:geometric} also has a rich history. It began  with the  inequality of K.~Knopp~\cite{Kno:28}, see also T.~Carleman~\cite{Car:1923}, and over the following decades was studied by many~\cite{Hei:75,Coc:84,Lov:86,Hei:90,Opi:94,Pic:94,Jai:00,Jai:01}. The approach in~\cite{Per:02} was based on the combination of classical and modern ideas, namely the characterization of the best constant in a particular instance of~\eqref{E:Hardy} by G.A.~Bliss~\cite{Bli:30} and a related result by V.M.~Manakov~\cite{Man:92}. It turned out that a new necessary and sufficient condition of~\eqref{E:Hardy} was needed. The result can be summarized as follows (\cite{Per:02,Ste:94}).

\begin{theorem-A}[Persson and Stepanov]
    The inequality~\eqref{E:Hardy} holds 
    if and only if $A_{PS}<\infty$, where
    \begin{equation*}
        A_{PS} = \begin{cases}
            \sup\limits_{t \in (a,b)} \left(\int_{a}^{t}v^{1-p'}\right)^{-\frac{1}{p}} \left(\int_a^t \left(\int_{a}^{s}v^{1-p'}\right)^{q} w(s) \,ds\right)^{\frac{1}{q}}
                & \text{if $1<p\le q<\infty$,}
                    \\
\int_{a}^{b}\left(\int_{a}^{t}\left(\int_{a}^{s}v^{1-p'}\right)^qw(s)\,ds\right)^{\frac{r}{p}}w(t)\left(\int_{a}^{t}v^{1-p'}\right)^{q-\frac{r}{p}}\,dt
                & \text{if $0<q<p<\infty$, $p>1$.}
            \end{cases}
    \end{equation*}
\end{theorem-A}
We note that, throughout the text, by requiring that some expression, which defines certain balance condition (such as $A_{PS}$ above or $A_{\varepsilon}, B_{1,\varepsilon},B_{2,\varepsilon}$ below), is finite, we require at the same time that any its subexpression  is finite as well. 

In the course of the proof, duality methods were applied in the convex case, and a discretization technique was used in the non-convex one. The result avoids the case $p=1$, and, moreover, in the non-convex environment, the case when the quantity $\int_{a}^{b}v^{1-p'}$ is finite requires a different treatment from the case when it is not.

Our principal goal in this paper is to offer a lateral point of view to Theorem~A. First, we develop a completely new proof of the result. Our approach is based on certain ideas developed recently in~\cite{Gog:25}, and it has some pleasant features. Most importantly, it extends the result to the full range of parameters, including the limiting case $p=1$. Next, it establishes an entire scale of characterizing conditions, a fact that gives one certain versatility when applying the result. Furthermore, both the statement and the proof are uniform with respect to all parameter values. Hence, the case $p=1$ no longer has to be singled out as a special case. Our proof unifies the argument that had to be split in~\cite[Theorem~3]{Per:02} into two separate cases, so the distinction is no longer necessary. Last but not least, it completely avoids all kinds of duality or discretization techniques and rests only upon elementary tools such as the Minkowski integral inequality and trivial monotonicity properties of functions given by integrals.

We shall use the following notation. Throughout the paper, we assume that $-\infty\le a<b\le\infty$. For a weight $v$ on $(a,b)$ and $t\in(a,b)$, we set
\begin{equation}\label{E:V}
    V(t) =
        \begin{cases}
            \left(
            \int_{a}^{t}v^{1-p'} \right)^{\frac{1}{p'}} 
            &\text{if $p\in(1,\infty)$,}
                \\            \operatornamewithlimits{ess\,sup}\limits_{s\in(a,t)}\frac{1}{v(s)} &\text{if $p=1$.}
        \end{cases}
\end{equation}
We denote by $\mathcal{M}^+(a, b)$ the collection of all nonnegative measurable functions on $(a,b)$. We use the symbol $\lesssim$ when the expression on the left of it is majorized by constant times that on the right. We write $\approx$ to represent the conjunction of $\lesssim$ and $\gtrsim$. We will denote by $LHS_{(*)}$ the expression standing on the left-hand side of inequality $(*)$, and analogously for $RHS_{(*)}$.

We will formulate the main result as two theorems, broken down by comparing the parameters $p$ and $q$. We begin with the convex case. 

\begin{theorem}\label{T:1}
Let $1\le p\leq q< \infty$ and let $v, w$ be weights on $(a,b)$.
Then the following statements are equivalent:

{\rm(i)} There exists a positive constant $C$ such that the inequality~\eqref{E:Hardy}
holds for all $f \in \mathcal{M}^+(a, b)$.

{\rm(ii)} For every $\varepsilon\in(0,\infty)$ one has $A_{\varepsilon} <\infty$, where
\begin{equation*}
A_{\varepsilon} = \sup\limits_{t \in (a, b)} V(t)^{-\varepsilon} \left(\int_a^t V^{q(\varepsilon+1)} w\right)^{\frac{1}{q}}.
\end{equation*}

{\rm(iii)} There exists $\varepsilon\in(0,\infty)$ such that $A_{\varepsilon} <\infty$.

Moreover, for every fixed $\varepsilon$, the least constant $C$ in \eqref{E:Hardy} satisfies $C\approx A_{\varepsilon}$ with constants of equivalence depending only on $v,w,p,q$ and $\varepsilon$.
\end{theorem}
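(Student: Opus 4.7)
The plan is to prove the cycle (iii)$\Rightarrow$(i)$\Rightarrow$(ii)$\Rightarrow$(iii); only the last implication is trivial, and both nontrivial directions will be quantitative, yielding $C\approx A_{\varepsilon}$ with constants depending only on $p,q,\varepsilon$. My sufficiency argument will be based on a weighted H\"older--Minkowski scheme governed by an auxiliary parameter $\beta\in(0,1)$ (any value works; $\beta=\tfrac12$ is convenient). Splitting $f=(fV^{\beta}v^{1/p})(V^{-\beta}v^{-1/p})$ and applying H\"older's inequality (its $L^{1}$--$L^{\infty}$ form when $p=1$, via the a.e.\ inequality $v^{-1}\le V$ built into the definition of $V$ for $p=1$), together with the explicit integration $\int_{a}^{t}v^{1-p'}V^{-p'\beta}\,ds=(1-\beta)^{-1}V(t)^{p'(1-\beta)}$, yields uniformly in $p\in[1,\infty)$
\[
\int_{a}^{t}f \;\le\; c(p,\beta)\,V(t)^{1-\beta}\Bigl(\int_{a}^{t}f^{p}v\,V^{p\beta}\Bigr)^{1/p}.
\]
Raising to the $q$-th power, integrating against $w(t)$, and applying the Minkowski integral inequality with exponent $q/p\ge 1$ collapse the Hardy inequality to the single pointwise tail estimate
\[
\int_{s}^{b}V^{q(1-\beta)}w \;\le\; c(\beta,\varepsilon)\,A_{\varepsilon}^{q}\,V(s)^{-q\beta},\qquad s\in(a,b).
\]

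Proving this tail bound is the heart of the argument and where I expect the main difficulty: naive Minkowski delivers an $\int_{s}^{b}$-type integral, while the hypothesis $A_\varepsilon<\infty$ controls only $\int_{a}^{s}$-type integrals. The key algebraic identity that bridges the two is
\[
V^{q(1-\beta)}w\,dt \;=\; V^{-q(\beta+\varepsilon)}\,dW_{\varepsilon},\qquad W_{\varepsilon}(t):=\int_{a}^{t}V^{q(\varepsilon+1)}w,
\]
in which $W_{\varepsilon}(t)\le A_{\varepsilon}^{q}V(t)^{q\varepsilon}$ by hypothesis. Integration by parts from $s$ to $b$ (read in the Stieltjes sense when $p=1$), combined with the monotonicity that $V^{-q(\beta+\varepsilon)}$ is decreasing and the choice $\beta>0$ making the boundary term at $b$ either vanish (when $V(b^{-})=\infty$) or be harmlessly finite (when $V(b^{-})<\infty$), together with the elementary evaluation $\int_{s}^{b}V^{-q\beta-1}\,dV=(V(s)^{-q\beta}-V(b^{-})^{-q\beta})/(q\beta)$, delivers the claimed bound with explicit constant $(\beta+\varepsilon)/\beta$. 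Substituting back into the Minkowski estimate and exploiting the clean cancellation $V^{p\beta}(s)\cdot V(s)^{-p\beta}=1$ then produces \eqref{E:Hardy} with $C\lesssim A_{\varepsilon}$.

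For the necessity (i)$\Rightarrow$(ii) I plan to test \eqref{E:Hardy}, for each fixed $\varepsilon>0$ and $t_{0}\in(a,b)$, against
\[
f_{t_{0}}(s)=\bigl(V^{\varepsilon+1}\bigr)'(s)\,\chi_{(a,t_{0})}(s),
\]
interpreted as a Stieltjes density when $p=1$. Then $\int_{a}^{t}f_{t_{0}}=V(\min(t,t_{0}))^{\varepsilon+1}$, so the Hardy left-hand side dominates $(\int_{a}^{t_{0}}V^{q(\varepsilon+1)}w)^{1/q}$; the same change of variable $v^{1-p'}\,ds=p'V^{p'-1}\,dV$ used above evaluates $(\int_{a}^{b}f_{t_{0}}^{p}v)^{1/p}$ to an explicit constant multiple of $V(t_{0})^{\varepsilon}$. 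Dividing and taking the supremum over $t_{0}$ gives $A_{\varepsilon}\lesssim C$ for every $\varepsilon>0$, which is precisely (ii); the implication (ii)$\Rightarrow$(iii) is trivial. Because the same $\beta$ and the same template of test function work uniformly across all $p\in[1,\infty)$ and all $\varepsilon>0$, no separate treatment of the critical case $p=1$ is needed, in contrast with~\cite{Per:02}.
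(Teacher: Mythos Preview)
Your sufficiency argument (iii)$\Rightarrow$(i) is correct and is essentially a reorganisation of the paper's proof: you apply Minkowski first and then reduce to the tail bound $\int_{s}^{b}V^{q(1-\beta)}w\lesssim A_{\varepsilon}^{q}V(s)^{-q\beta}$, proved by integration by parts against $W_{\varepsilon}$; the paper instead expands $V(t)^{-q(\lambda+\varepsilon)}$ as a Stieltjes integral first and then applies Minkowski. The two computations are interchangeable and yield the same constants. Your necessity argument for $p>1$ is also fine and coincides (up to a multiplicative constant) with the paper's test function $f_{t}=V^{1+\varepsilon-p'}v^{1-p'}\chi_{(a,t)}$.

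The genuine gap is at $p=1$ in (i)$\Rightarrow$(ii). The object $(V^{\varepsilon+1})'$ is in general \emph{not} a function: for $p=1$ the running essential supremum $V(s)=\esssup_{y\in(a,s)}v(y)^{-1}$ is merely nondecreasing and left-continuous, and it may well have jumps (take e.g.\ $v$ piecewise constant with a downward step). At a jump the distributional derivative contains a Dirac atom, and there is no $f\in\mathcal M^{+}(a,b)$ with $\int_{a}^{t}f=V(t)^{\varepsilon+1}$. ``Interpreting as a Stieltjes density'' does not help, because inequality~\eqref{E:Hardy} is formulated for functions, not for measures; you cannot plug a measure in. Moreover, even when $V$ is absolutely continuous, your claimed evaluation $\int_{a}^{t_0}(V^{\varepsilon+1})'v\approx V(t_0)^{\varepsilon}$ is not automatic: the a.e.\ inequality built into the definition of $V$ is only $v^{-1}\le V$, not $v^{-1}\approx V$, and on the set where $V'>0$ the weight $v$ need not be comparable to $1/V$. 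This is precisely why the paper, in the $p=1$ case, constructs the level sets $E_k=\{\sigma^k<V\le\sigma^{k+1}\}$ and the subsets $G_k\subset E_k$ on which $1/v>\sigma^k$, and builds $f_t$ as a weighted sum of $\chi_{G_k}/(v|G_k|)$. That construction manufactures a genuine function whose primitive is comparable to $V^{\varepsilon+1}$ \emph{and} whose $L^1(v)$ norm is controlled by $V(t)^{\varepsilon}$; your proposal supplies no substitute for this step, so the claim that ``no separate treatment of the critical case $p=1$ is needed'' is unjustified.
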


\begin{remark}
    For $1<p\le q<\infty$, one has $A_{p'-1}=A_{PS}$, hence Theorem~\ref{T:1} recovers the convex part of Theorem~A.
\end{remark}

As usual, things are more complicated in the non-convex case contained in the next theorem.

\begin{theorem} \label{T:2}
Let $1\le p<\infty$, $0 < q < p < \infty$, $r=\frac{pq}{p-q}$ and $v, w$ be weights on $(a,b)$.
Then the following statements are equivalent:

{\rm(i)} There exists a positive constant $C$ such that the inequality \eqref{E:Hardy}
holds for all $f \in \mathcal{M}^+(a, b)$.

{\rm(ii)}  For every $\varepsilon\in(0,\infty)$ one has $B_{1,\varepsilon} <\infty$,  where
\begin{equation*}
B_{1,\varepsilon} = \left(\int_a^b \bigg(\int_a^t V^{q(\varepsilon+1)} w\bigg)^{\frac{r}{p}} V(t)^{q(\varepsilon+1)-\varepsilon r} w(t) \, dt\right)^{\frac{1}{r}}.
\end{equation*}

{\rm(iii)}  For every $\varepsilon\in(0,\infty)$ one has $B_{2,\varepsilon} <\infty$, where
\begin{align*}
B_{2,\varepsilon} &= \left(\int_a^b \bigg(\int_a^t V^{q(\varepsilon +1)} w\bigg)^{\frac{r}{q}} \, d \left[-V(t)^{-\varepsilon r}\right]  \right)^{\frac{1}{r}} + V(b)^{-\varepsilon} \bigg(\int_a^b V^{q(\varepsilon+1)} w\bigg)^{\frac{1}{q}}.
\end{align*}

{\rm(iv)} There exists $\varepsilon\in(0,\infty)$ such that $B_{1,\varepsilon} <\infty$.

{\rm(v)} There exists $\varepsilon\in(0,\infty)$ such that $B_{2,\varepsilon} <\infty$.

Moreover, for every fixed $\varepsilon>0$, the least constant $C$ in \eqref{E:Hardy} satisfies $C\approx B_{1,\varepsilon}\approx B_{2,\varepsilon}$ with equivalence constants dependent only on $p,q,v,w$ and $\varepsilon$.
\end{theorem}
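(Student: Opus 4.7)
Since \eqref{E:Hardy} is independent of $\varepsilon$, it suffices to prove, for one fixed $\varepsilon>0$, the equivalence $\eqref{E:Hardy}\Leftrightarrow B_{1,\varepsilon}<\infty$ with $C\approx B_{1,\varepsilon}$, together with $B_{1,\varepsilon}\approx B_{2,\varepsilon}$ for every fixed $\varepsilon>0$; the ``for some''/``for every'' upgrades then follow automatically, closing the chain (i)--(v). Throughout, I write $H(t):=\int_a^t V^{q(\varepsilon+1)} w$, so that $B_{1,\varepsilon}^r=\int_a^b H^{r/p} V^{-\varepsilon r}\,dH$.

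\emph{Equivalence $B_{1,\varepsilon}\approx B_{2,\varepsilon}$.} The arithmetic identity $r/q-1=r/p$, immediate from $r=pq/(p-q)$, enables Stieltjes integration by parts in the first summand of $B_{2,\varepsilon}^r$:
\begin{equation*}
    \int_a^b H^{r/q}\,d[-V^{-\varepsilon r}] = \frac{r}{q}\,B_{1,\varepsilon}^r - V(b)^{-\varepsilon r}H(b)^{r/q} + \lim_{t\to a^+} H(t)^{r/q} V(t)^{-\varepsilon r}.
\end{equation*}
The elementary monotonicity bound $H(t)\le V(t)^{q(\varepsilon+1)}\int_a^t w$, valid uniformly in $p\ge 1$, forces the limit to vanish, whence each of the two summands comprising $B_{2,\varepsilon}$ is individually controlled by $B_{1,\varepsilon}$ and conversely, with constants depending only on $p,q,\varepsilon$.

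\emph{Sufficiency.} For $p>1$, applying H\"older's inequality to the factorization $f=(fv^{1/p}V^{-\varepsilon})(v^{-1/p}V^{\varepsilon})$ and evaluating the companion integral $\int_a^t v^{1-p'} V^{\varepsilon p'}$ through the substitution $u=V^{p'}(s)$ yields the weighted pointwise estimate
\begin{equation*}
    \int_a^t f \;\le\; (1+\varepsilon)^{-1/p'}\,V(t)^{1+\varepsilon}\,\tilde G(t)^{1/p},\qquad \tilde G(t):=\int_a^t f^p v\, V^{-\varepsilon p};
\end{equation*}
the $p=1$ version follows from $1/v\le V$ together with the monotonicity of $V^{1+\varepsilon}$. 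Substituting into \eqref{E:Hardy} and recognising $V^{q(\varepsilon+1)}w\,dt=dH$ reduces the Hardy left-hand side to an estimate for $\int_a^b\tilde G^{q/p}\,dH$. I then invoke Minkowski's integral inequality after a Fubini rearrangement---in the form adapted to the exponent $p/q>1$---combined with a H\"older split with conjugate exponents $p/q$ and $r/q=p/(p-q)$, engineered so that one H\"older factor reproduces $B_{1,\varepsilon}^r$ and the other, via the arithmetic identities $(1-q/p)(r/q)=1$ and $r/q-1=r/p$, reduces to a constant multiple of $\bigl(\int_a^b f^p v\bigr)^{q/p}$. The combined estimate produces the desired $C\lesssim B_{1,\varepsilon}$.

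\emph{Necessity.} Testing \eqref{E:Hardy} against the one-parameter family $f_T(s)=v^{1-p'}(s)\,V(s)^{\varepsilon+1-p'}\,\psi_T(s)\,\chi_{(a,T)}(s)$---where the auxiliary factor $\psi_T$ (a suitable power of $H$ or of $H(T)-H$) is chosen so that the substitution $u=V^{p'}(s)$ makes both $\int_a^t f_T$ and $\int_a^T f_T^p v$ explicit in $V$ and $H$---and then letting $T\to b$ yields $B_{1,\varepsilon}\lesssim C$ after simplification via the identities $(\varepsilon+1-p')p+p'=p\varepsilon$ and $r/q-1=r/p$. The case $p=1$ is handled in parallel using the monotone essential-supremum structure of $V$. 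The main obstacle I anticipate is the orchestration of the Minkowski/H\"older split in the sufficiency step: the auxiliary weight $V^{-\varepsilon p}$ introduced by the pointwise estimate must be wholly absorbed into the $B_{1,\varepsilon}^q$ factor rather than leaving a residue in $\bigl(\int f^p v\bigr)^{q/p}$; this requires the exponents $p/q,r/q$ and the identity $r/q-1=r/p$ to conspire exactly, and is the place where the ideas from \cite{Gog:25} replace the duality and discretization techniques of \cite{Per:02}. A secondary technical hurdle is the uniform verification, over all $p\in[1,\infty)$, that the Stieltjes boundary limits vanish under the sole standing assumption that one of the balance quantities be finite.
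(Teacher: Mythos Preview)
Your sufficiency argument has a genuine gap, and it stems from the sign of the exponent in your pointwise estimate. You split $f=(fv^{1/p}V^{-\varepsilon})(v^{-1/p}V^{\varepsilon})$, obtaining
\[
\int_a^t f \lesssim V(t)^{1+\varepsilon}\,\tilde G(t)^{1/p},\qquad \tilde G(t)=\int_a^t f^p v\,V^{-\varepsilon p},
\]
and are then left with $\int_a^b \tilde G^{q/p}\,dH$. You acknowledge that absorbing $V^{-\varepsilon p}$ into the $B_{1,\varepsilon}$ factor is ``the main obstacle,'' but you do not resolve it, and in fact with this choice it cannot be resolved by the Minkowski/H\"older/Fubini mechanism you describe. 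The paper (and \cite{Gog:25}) instead uses the estimate with a \emph{positive} parameter $\lambda\in(0,\varepsilon)$,
\[
\int_a^t f \lesssim V(t)^{1-\lambda}\Bigl(\int_a^t f^p V^{\lambda p} v\Bigr)^{1/p},
\]
so that the auxiliary weight is $V^{\lambda p}$ with $\lambda>0$. After writing $V(t)^{q(1-\lambda)}=V(t)^{-q(\lambda+\varepsilon)}V(t)^{q(\varepsilon+1)}$, the crucial device is the representation
\[
V(t)^{-q(\lambda+\varepsilon)}\approx \int_t^b V(s)^{-q(\lambda+\varepsilon)+\varepsilon r}\,d\bigl[-V(s)^{-\varepsilon r}\bigr]+V(b)^{-q(\lambda+\varepsilon)},
\]
which, after Fubini and H\"older with exponents $(p/q,r/q)$, makes one factor exactly $B_{2,\varepsilon}^q$ while the other collapses via $\int_t^b V^{-p\lambda+\varepsilon r}\,d[-V^{-\varepsilon r}]\approx V(t)^{-p\lambda}$ and the cancellation $V^{\lambda p}\cdot V^{-\lambda p}=1$. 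With your choice $\lambda=-\varepsilon$ the analogue of this inner integral is $\approx V(b)^{p\varepsilon}-V(t)^{p\varepsilon}$, which is infinite when $V(b)=\infty$ and otherwise leaves the uncontrollable factor $V(b)^{p\varepsilon}V(t)^{-p\varepsilon}$; no cancellation occurs. The sign of $\lambda$ is not cosmetic---it is the mechanism.

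Two further points. First, your argument that $\lim_{t\to a^+}H(t)^{r/q}V(t)^{-\varepsilon r}=0$ via $H(t)\le V(t)^{q(\varepsilon+1)}\int_a^t w$ tacitly assumes $\int_a^{t} w<\infty$, which is not part of the hypotheses; the paper instead extracts this limit directly from $B_{1,\varepsilon}<\infty$ by bounding $V(t)^{-\varepsilon r}\ge V(s)^{-\varepsilon r}$ inside the defining integral. Second, your necessity sketch is too thin: for $p>1$ the paper's test function involves $\bigl(\int_x^b H^{r/q}V^{-p'}\,d[-V^{-\varepsilon r}]\bigr)^{1/p}v^{1-p'}$ together with a preliminary truncation $w_1\le w$, $v_1\ge v$ ensuring $\widetilde B_{2,\varepsilon}<\infty$ so that one may legitimately divide by it; for $p=1$ the phrase ``handled in parallel'' hides a genuinely different construction, since $v^{1-p'}$ is unavailable and one must build test functions from level sets $\{\sigma^k<V_1\le\sigma^{k+1}\}$ of the essential supremum.
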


\begin{remark}
    For $0<q<p<\infty$ and $p>1$, one has $B_{1,p'-1}=A_{PS}$, hence Theorem~\ref{T:2} recovers the non-convex part of Theorem~A.
\end{remark}

\section{Proofs}

\begin{proof}[Proof of Theorem~\ref{T:1}]
Clearly, $(ii)$ implies $(iii)$.

$(iii)\Rightarrow (i)$: Let $\varepsilon \in (0,\infty)$ be such that $A_{\varepsilon}<\infty$, and let $0< \lambda <\min(1,\varepsilon)$. We claim that
\begin{equation}\label{E:1}
    \int_a^t f  \lesssim \bigg(\int_a^t f^p V^{\lambda p } v\bigg)^{\frac{1}{p}} V(t)^{1-\lambda}
    \quad\text{for every $t\in(a,b)$.}
\end{equation}
The proof of~\eqref{E:1} (at least with $\varepsilon\in(0,1)$, $a=0$ and $b=\infty$) can be found in~\cite[(2.1)]{Gog:25}, but since we need a certain nontrivial modification of it, we will give a detailed proof here. 
Fix $t\in(a,b)$. If $p\in(1,\infty)$, then, by H\"older's inequality,
\begin{align*}
    \int_{a}^{t}f & = \int_{a}^{t}fV^{\lambda}v^{\frac{1}{p}}V^{-\lambda }v^{-\frac{1}{p}}
        \le \left(\int_{a}^{t}f^{p}V^{\lambda p}v\right)^{\frac{1}{p}}
        \left(\int_{a}^{t}V^{-\lambda  p'}v^{1-p'}\right)^{\frac{1}{p'}}.
\end{align*}
By a change of variables, we obtain
\begin{equation*}
    \int_{a}^{t}V^{-\lambda p'}v^{1-p'} = \int_{a}^{t} \left(\int_{a}^{s}v^{1-p'}\right)^{-\lambda }v(s)^{1-p'}\,ds = \frac{1}{1-\lambda }\left(\int_{a}^{t}v^{1-p'}\right)^{1-\lambda}
    = \frac{1}{1-\lambda } V(t)^{(1-\lambda )p'},
\end{equation*}
hence
\begin{equation*}
    \int_{a}^{t}f \lesssim \left(\int_{a}^{t}f^{p}V^{\lambda  p}v\right)^{\frac{1}{p}}V(t)^{1-\lambda },
\end{equation*}
and~\eqref{E:1} follows. If $p=1$, then
\begin{equation*}
    \int_{a}^{t}f = \int_{a}^{t}fV^{\lambda}vv^{-1}V^{-\lambda}.
\end{equation*}
Since, for every $s\in(a,t)$, one has, owing to the definition of $V$ and monotonicity,
\begin{equation*}
    v(s)^{-1}V(s)^{-\lambda}\le V(s)^{1-\lambda}\le V(t)^{1-\lambda},
\end{equation*}
we, in fact, get
\begin{equation*}
    \int_{a}^{t}f 
        \le \left(\int_{a}^{t}fV^{\lambda }v\right) V(t)^{1-\lambda},
\end{equation*}
which is~\eqref{E:1} once again. 

Using~\eqref{E:1}, now proved, we have
\begin{align}
{LHS}_{\eqref{E:Hardy}}^q & \lesssim \int_a^b \bigg(\int_a^t f^p V^{\lambda p } v\bigg)^{\frac{q}{p}} V(t)^{q(1-\lambda)} w(t) \, dt \notag \\
& = \int_a^b \bigg(\int_a^t f^p V^{\lambda p } v\bigg)^{\frac{q}{p}} V(t)^{-q(\lambda+\varepsilon)} V(t)^{q(\varepsilon+1)} w(t) \, dt. \label{E:LHS}
\end{align}
For every $t\in (a,b)$, one has
\begin{equation*}
V(t)^{-q(\lambda+\varepsilon)} \approx \left(\int_t^b d\bigg[-V(s)^{-p(\lambda+\varepsilon)} \bigg]\right)^{\frac{q}{p}} + V(b)^{-q(\lambda+\varepsilon)}.
\end{equation*}
Therefore,
\begin{align}\label{E:II}
LHS_{\eqref{E:Hardy}}^q & \lesssim  \int_a^b \bigg(\int_a^t f^p V^{\lambda p} v\bigg)^{\frac{q}{p}} \left(\int_t^b d\bigg[-V(s)^{-p(\lambda+\varepsilon)} \bigg]\right)^{\frac{q}{p}} V(t)^{q(\varepsilon+1)} w(t) \, dt \notag\\
& \quad +    \int_a^b \bigg(\int_a^t f^p V^{\lambda p } v\bigg)^{\frac{q}{p}} V(b)^{-q(\lambda+\varepsilon)} V(t)^{q(\varepsilon+1)} w(t) \, dt =: I + II.
\end{align}
Using monotonicity, we have
\begin{align*}
I \leq \int_a^b  \left( \int_t^b  \bigg(\int_a^s f^p V^{\lambda p} v\bigg)d\bigg[-V(s)^{-p(\lambda+\varepsilon)}\bigg]\right)^{\frac{q}{p}} V(t)^{q(\varepsilon+1)} w(t) \, dt.
\end{align*}
Since $p \leq q$, applying Minkowski's integral inequality with $\frac qp \geq 1$, we obtain
\begin{align*}
I \leq \left(\int_a^b \bigg(\int_a^s f^p V^{\lambda p} v\bigg) \left( \int_a^s V(t)^{q(\varepsilon+1)} w(t) \, dt \right)^{\frac{p}{q}} d\bigg[-V(s)^{-p(\lambda+\varepsilon)} \bigg] \right)^{\frac{q}{p}}.
\end{align*}
Owing to the fact that $A_\varepsilon<\infty$, for each $s \in (a,b)$ we have
\begin{equation}
\left(\int_a^s V(t)^{q(\varepsilon+1)} w(t) dt\right)^{\frac{p}{q}} \leq A_\varepsilon^p V(s)^{\varepsilon p}.
\end{equation}
Therefore,  Fubini's theorem gives 
\begin{align*}
I &\leq A_\varepsilon^q \left(\int_a^b \bigg(\int_a^s f^p V^{\lambda p} v\bigg) V(s)^{\varepsilon p} \,  d\bigg[-V(s)^{-p(\lambda+\varepsilon)} \bigg] \right)^{\frac{q}{p}} 
    \\
&=A_\varepsilon^q \left(\int_a^b  f(t)^p V(t)^{\lambda p} v(t) \int_t^b V(s)^{\varepsilon p}  d\bigg[-V(s)^{-p(\lambda+\varepsilon)}\bigg ] dt \right)^{\frac{q}{p}}
    \\
& \approx A_\varepsilon^q \left(\int_a^b f(t)^p  V(t)^{\lambda p} v(t) \bigg[ V(t)^{-\lambda p} -V(b)^{-\lambda p} \bigg] dt \right)^{\frac{q}{p}} 
    \\
&\leq A_\varepsilon^q \left(\int_a^b f(t)^p  v(t) dt \right)^{\frac{q}{p}}.
\end{align*}
If $V(b)=\infty$, then $II=0$, and there is nothing to prove. Assume thus that $V(b)<\infty$. Then, by monotonicity, we have, 
\begin{align*}
II & = V(b)^{-q(\lambda+\varepsilon)} 
\int_a^b \bigg(\int_a^t f^p V^{\lambda p } v\bigg)^{\frac{q}{p}}  V(t)^{q(\varepsilon+1)} w(t) \, dt   \\
& \leq  V(b)^{-q\varepsilon} 
\int_a^b V(t)^{q(\varepsilon+1)} w(t) \, dt
\bigg(\int_a^b f^p v\bigg)^{\frac{q}{p}} 
    \le A_\varepsilon^q \left(\int_a^b f(t)^p  v(t) dt \right)^{\frac{q}{p}}.
\end{align*}
Then, in view of \eqref{E:II}, we arrive at
\begin{equation*}
    LHS_{\eqref{E:Hardy}}^q \lesssim I + II \lesssim A_\varepsilon^q \left(\int_a^b f(t)^p  v(t) dt \right)^{\frac{q}{p}}.
\end{equation*}
In other words, inequality \eqref{E:Hardy} holds and the best constant in \eqref{E:Hardy} satisfies $C\lesssim A_\varepsilon$.

$(i)\Rightarrow (ii)$: Fix $\varepsilon\in(0,\infty)$ and assume that \eqref{E:Hardy} holds for all $f \in \mathcal{M}^+(a, b)$. Fix $t\in(a,b)$. If $p\in (1,\infty)$, then, testing the inequality with the function $f_t =V^{1+\varepsilon-p'} v^{1-p'} \chi_{(a,t)}$, we obtain
\begin{align}
LHS_{\eqref{E:Hardy}} \geq \left(\int_a^t \left(\int_a^{s}f_t\right)^{q} w(s) \, ds\right)^{\frac{1}{q}} \approx \left(\int_a^t V(s)^{(\varepsilon+1)q} w(s) \, ds\right)^{\frac{1}{q}},
\end{align}
whereas
\begin{align}
RHS_{\eqref{E:Hardy}}
\approx C V(t)^{\varepsilon},
\end{align}
and the constants of equivalence in each of the latter two estimates are independent of $t$.
Consequently, (i) implies
\begin{equation*}
    \left(\int_a^t V^{(\varepsilon+1)q} w\right)^{\frac{1}{q}} \lesssim C V(t)^{\varepsilon} \quad \text{for every $t\in (a,b)$,}
\end{equation*}
hence (ii) holds with $A_\varepsilon\lesssim C$.

Let $p=1$. Fix some $\sigma>1$ and define
\begin{equation*}
    E_k = \{t\in(a,b) : \sigma^k<V(t)\le \sigma^{k+1}\} \quad \text{for $k\in\Z$.}
\end{equation*}
Set $\mathbb A = \{k\in\Z : |E_k|>0\}$. 
Then $(a,b)=\bigcup_{k\in\mathbb A}E_k$, in which the equality is understood up to a set of measure zero, in which the union is disjoint and each $E_k$ is a nondegenerate interval (which could be either open or closed at each end) with endpoints $a_k$ and $b_k$, $a_k<b_k$. For every $k\in\mathbb A$, we find $\delta_k > 0$ so that $a_k+\delta_k<b_k$, 
and then we define the set
\begin{equation*}
    G_k = \left\{t\in(a_k,a_k+\delta_k) : \frac{1}{v(t)} >\sigma^{k}\right\}.
\end{equation*}
Since $V$ is non-decreasing and left-continuous, $|G_k|>0$ for every $k\in\mathbb A$. Fix $t\in(a,b)$ and set \begin{equation*}
    f_t(s) = \chi_{(a,t)}(s)\sum_{k\in\mathbb A} \frac{V(s)^{\varepsilon}\chi_{_{G_k}}(s)}{v(s)|G_k|}
    \quad\text{for $s\in(a,b)$.} 
\end{equation*}
There is a uniquely defined $k\in\mathbb A$ such that $t\in(a_k,b_k]$. Consequently
\begin{align*}
      \int_{a}^{b}f_tv  & \le \sum_{j\in\mathbb A,\ j\le k}\frac{1}{|G_j|}\int_{G_j}V^{\varepsilon }
       \le \sum_{j=-\infty}^{k}\sigma^{\varepsilon (j+1)} = \frac{\sigma^{\varepsilon (k+2)}}{\sigma^{\varepsilon }-1}\lesssim V(t)^{\varepsilon }.
\end{align*}
On the other hand, for every $s\in(a_k,b_k]$
\begin{align*}
  \int_{a}^{s}f_t\ge \frac{1}{|G_k|}\int_{G_k}V(y)^{\varepsilon }\frac{1}{v(y)}\, dy \ge \sigma^{k(\varepsilon +1)}=\frac{\sigma^{(k+1)(\varepsilon +1)}}{ \sigma^{\varepsilon +1}}\ge \frac{1}{ \sigma^{\varepsilon +1}}V(s)^{\varepsilon +1}.
\end{align*}
The last two estimates, together with the validity of the inequality \eqref{E:Hardy}, yield
\begin{equation}\label{E:upper-estimate-of-integral-1}
     \bigg( \int_{a}^{t}V^{(\varepsilon +1)q}w\bigg)^{\frac{1}{q}} \lesssim C V(t)^{\varepsilon } \quad\text{for $t\in(a,b)$,}
\end{equation}
and (ii) follows with $A_\varepsilon \lesssim C$.
\end{proof}

\begin{proof}[Proof of Theorem~\ref{T:2}] 
Clearly, $(ii)$ implies $(iv)$ and  $(iii)$ implies $(v)$.

$(ii) \Leftrightarrow(iii)$ and $(iv) \Leftrightarrow(v)$: Fix $\varepsilon\in(0,\infty)$. 
If $B_{1,\varepsilon}<\infty$, then we have 
\begin{equation*}
    \lim_{s\to a}
    V(s)^{-\varepsilon}
    \left(\int_a^s V(t)^{q(\varepsilon+1)} w(t) \, dt\right)^{\frac{1}{q}}=0,
\end{equation*}

which follows from the following simple estimate: for a fixed $x\in(a,b)$, we have 
\begin{align*}
   &\left(\int_a^x \bigg(\int_a^t V^{q(\varepsilon+1)} w\bigg)^{\frac{r}{p}} V(t)^{q(\varepsilon+1) -\varepsilon r} w(t) \, dt\right)^{\frac{1}{r}} 
    \\
    &\qquad \ge
    \sup_{s\in(a,x)}
    \left(\int_a^s \bigg(\int_a^t V^{q(\varepsilon+1)} w\bigg)^{\frac{r}{p}} V(t)^{q(\varepsilon+1)}  V(s)^{-\varepsilon r}w(t) \, dt\right)^{\frac{1}{r}} 
    \\
    &\qquad\approx
    \sup_{s\in(a,x)}
    V(s)^{-\varepsilon}
    \left(\int_a^s V(t)^{q(\varepsilon+1)} w(t) \, dt\right)^{\frac{1}{q}}.
\end{align*}
Thus, by integrating by parts, we get 
\begin{align}
&\int_a^b \bigg(\int_a^t V^{q(\varepsilon+1)} w\bigg)^{\frac{r}{p}} V(t)^{q(\varepsilon+1) -\varepsilon r} w(t) \, dt
=\frac{q}{r}\int_a^b  V(t)^{-\varepsilon r}  \, d\bigg[\int_a^t V^{q(\varepsilon+1)} w\bigg]^{\frac{r}{q}} \notag
    \\
    &=\frac{q}{r}
\int_a^b \bigg(\int_a^t V^{q(\varepsilon+1)}  w\bigg)^{\frac{r}{q}} \, d\left[ -V(t)^{-\varepsilon r }\right] 
    +\frac{q}{r} V(b)^{-\varepsilon r} \bigg(\int_a^b V^{q(\varepsilon +1)} w\bigg)^{\frac{r}{q}},
   \label{B1B2}\end{align}
hence $B_{2,\varepsilon}\approx B_{1,\varepsilon}<\infty$.
This shows the implications $(ii)\Rightarrow(iii)$ and $(iv) \Rightarrow(v)$. Conversely, if $B_{2,\varepsilon}<\infty$, we have, for every $x\in(a,b)$,
\begin{align*}
    B_{2,\varepsilon}^r &\approx \int_a^b \bigg(\int_a^t V^{q(\varepsilon+1)}  w\bigg)^{\frac{r}{q}} \, d\left[ -V(t)^{-\varepsilon r }\right] + V(b)^{-\varepsilon r} \bigg(\int_a^b V^{q(\varepsilon +1)} w\bigg)^{\frac{r}{q}}\\
    &\ge  \int_x^b \bigg(\int_a^t V^{q(\varepsilon+1)}  w\bigg)^{\frac{r}{q}} \, d\left[ -V(t)^{-\varepsilon r }\right] + V(b)^{-\varepsilon r} \bigg(\int_a^x V^{q(\varepsilon +1)} w\bigg)^{\frac{r}{q}}\\
    &\ge  \bigg(\int_a^x V^{q(\varepsilon+1)}  w\bigg)^{\frac{r}{q}} V(x)^{-\varepsilon r}.
\end{align*}
By integration by parts, once again,
\begin{align*}
\left(\int_a^b \bigg(\int_a^t V^{q(\varepsilon+1)} w\bigg)^{\frac{r}{p}} V(t)^{q(\varepsilon+1) -\varepsilon r} w(t) \, dt\right)^{\frac{1}{r}} 
&=\left(\frac{q}{r}\int_a^b  V(t)^{-\varepsilon r}  \, d\bigg[\int_a^t V^{q(\varepsilon+1)} w\bigg]^{\frac{r}{q}}\right)^{\frac{1}{r}}\\
&\hskip-5cm \lesssim\left(\int_a^b \bigg(\int_a^t V^{q(\varepsilon+1)}  w\bigg)^{\frac{r}{q}} \, d\left[ -V(t)^{-\varepsilon r }\right]\right)^{\frac{1}{r}} 
 + V(b)^{-\varepsilon} \bigg(\int_a^b V^{q(\varepsilon +1)} w\bigg)^{\frac{1}{q}},
\end{align*}
hence $B_{1,\varepsilon}\lesssim B_{2,\varepsilon}<\infty$.
This shows the converse implications $(iii)\Rightarrow(ii)$ and $(v) \Rightarrow(iv)$. Moreover,~\eqref{B1B2} shows that $B_{1,\varepsilon}\approx B_{2,\varepsilon}$.

$(v) \Rightarrow (i)$: Let $\varepsilon\in(0,\infty)$ be such that $B_{2,\varepsilon} < \infty$. Fix $\lambda\in(0,\varepsilon)$.
Observe that, for $t\in (a,b)$, one has
\begin{equation}\label{E:v-i}
V(t)^{-q(\lambda+\varepsilon)} \approx \int_t^b V(s)^{-q(\lambda +\varepsilon)+\varepsilon r} \, d\left[-V(s)^{-\varepsilon r} \right] + V(b)^{-q(\lambda+\varepsilon )}.
\end{equation}
Note that~\eqref{E:LHS} is still true. Thus, combining~\eqref{E:LHS} with~\eqref{E:v-i}, we get
\begin{align*}
LHS_{\eqref{E:Hardy}}^q & \lesssim  \int_a^b \bigg(\int_a^t f^p V^{\lambda p} v\bigg)^{\frac{q}{p}} \left(\int_t^b V(s)^{-q(\lambda+\varepsilon) +\varepsilon r} \, d\left[-V(s)^{-\varepsilon r} \right]\right) V(t)^{q(\varepsilon+1)} w(t) \, dt \notag\\
& \quad +    \int_a^b \bigg(\int_a^t f^p V^{\lambda p} v\bigg)^{\frac{q}{p}} V(b)^{-q(\lambda +\varepsilon)} V(t)^{q(\varepsilon+1)} w(t) \, dt =: III + II,
\end{align*}
where $II$ is as in \eqref{E:II}.
Let us start with estimating $III$. Using monotonicity and then Fubini's theorem, we have
\begin{align*}
III & \leq  \int_a^b  \left(\int_t^b \bigg(\int_a^s f^p V^{\lambda p} v\bigg)^{\frac{q}{p}} V(s)^{-q(\lambda +\varepsilon)+\varepsilon r} \, d\left[-V(s)^{-\varepsilon r} \right] \right)V(t)^{q(\varepsilon +1)} w(t) \, dt \\
& = \int_a^b  \bigg(\int_a^s f^p V^{\lambda p} v\bigg)^{\frac{q}{p}} V(s)^{-q(\lambda +\varepsilon)+\varepsilon r} \left(\int_a^s V^{q(\varepsilon+1)} w\right) \, d\left[-V(s)^{-\varepsilon r} \right].
\end{align*}
Now, by H\"{o}lder's inequality with the exponents $(\frac pq, \frac rq)$, Fubini's theorem and the definition of $B_{2,\varepsilon}$, we have
\begin{align*}
III&\leq  \left( \int_a^b \left(\int_a^s f^p V^{\lambda p} v\right) V(s)^{\frac{p}{q}(-q(\lambda +\varepsilon)+\varepsilon r)}  \, d\left[-V(s)^{-\varepsilon r} \right] \right)^{\frac{q}{p}} \\
& \qquad\times \left(\int_a^b \left(\int_a^s V^{q(\varepsilon+1)} w\right)^{\frac{r}{q}} \, d\left[-V(s)^{-\varepsilon r} \right] \right)^{\frac{q}{r}}
    \\
&\leq B_{2,\varepsilon}^q \left( \int_a^b f(t)^p V(t)^{\lambda p} v(t)
\left(\int_t^b  V(s)^{\frac{p}{q}(-q(\lambda +\varepsilon) +\varepsilon r)}  \, d\left[-V(s)^{-\varepsilon r} \right]  \right) \, dt  \right)^{\frac{q}{p}} 
    \\
&= B_{2,\varepsilon}^q \left( \int_a^b f(t)^p V(t)^{\lambda p} v(t)
\left(\int_t^b  V(s)^{-p\lambda +\varepsilon r}  \, d\left[-V(s)^{-\varepsilon r} \right]  \right) \, dt  \right)^{\frac{q}{p}} \\
& \lesssim B_{2,\varepsilon}^q \left(\int_a^b f(t)^p  v(t) dt \right)^{\frac{q}{p}}
.
\end{align*}
Next, if $V(b)=\infty$, then $II=0$. If $V(b)<\infty$, then
\begin{align*}
II & \le V(b)^{-q\varepsilon} \left(\int_a^b V^{q(\varepsilon+1)} w\right)
\left(\int_{a}^{b}f^p v\right)^{\frac{q}{p}} 
 \leq B_{2,\varepsilon}^q \left(\int_a^b f^{p}v\right)^{\frac{q}{p}}.
\end{align*}
Therefore, we have
\begin{equation*}
LHS_{\eqref{E:Hardy}}^q \lesssim III + II \lesssim B_{2,\varepsilon}^q \left(\int_a^b f^{p}v\right)^{\frac{q}{p}},
\end{equation*}
whence the inequality \eqref{E:Hardy} holds, and the best constant in \eqref{E:Hardy} satisfies $C\lesssim  B_{2,\varepsilon}
$.

$(i) \Rightarrow (ii)$: 
Assume now that the inequality \eqref{E:Hardy} holds for all $f \in \mathcal{M}^+(a,b)$. Let first $p\in(1,\infty)$. Let $v_1$, $w_1$ be weights satisfying $w_1 \leq w$, $v \leq v_1$, $\int_a^b w_1 <\infty$ and $\int_{a}^{b}v_1^{1-p'}<\infty$. Set
\begin{equation}\label{E:V1-p-large}
V_1(t) :=\left( \int_{a}^{t}v_1^{1-p'} \right)^{\frac{1}{p'}} \quad\text{for $t\in[a,b]$.}
\end{equation}
Then
\begin{equation}\label{E:hardy-new}
\left(\int_a^b \left(\int_a^{t}f\right)^{q} w_1(t) \, dt\right)^{\frac{1}{q}} \leq C\left(\int_a^b f^{p}v_1\right)^{\frac{1}{p}}
\end{equation}
holds for all $f \in \mathcal{M}^+(a, b)$. 
Denote by
\begin{equation}\label{B-1-tilde}
\widetilde{B}_{1,\varepsilon} = \left(\int_a^b \bigg(\int_a^t V_1^{q(\varepsilon+1)} w_1\bigg)^{\frac{r}{p}} V_1(t)^{q(\varepsilon+1)-\varepsilon r} w_1(t) \, dt\right)^{\frac{1}{r}}
\end{equation}
and
\begin{align}\label{B-2-tilde}
\widetilde{B}_{2,\varepsilon} &= \left(\int_a^b \bigg(\int_a^t V_1^{q(\varepsilon +1)} w_1\bigg)^{\frac{r}{q}} \, d \left[-V_1(t)^{-\varepsilon r}\right]  \right)^{\frac{1}{r}} + V_1(b)^{-\varepsilon} \bigg(\int_a^b V_1^{q(\varepsilon+1)} w_1\bigg)^{\frac{1}{q}}.
\end{align}
Observe that, by monotonicity,
\begin{align}
\widetilde{B}_{2,\varepsilon} &\leq \bigg(\int_a^b  w_1\bigg)^{\frac{1}{q}} \left(\int_a^b  V_1(t)^{r(\varepsilon +1)} \, d \left[-V_1(t)^{-\varepsilon r}\right]  \right)^{\frac{1}{r}} + V_1(b)\bigg(\int_a^b w_1\bigg)^{\frac{1}{q}}\notag\\
 &\approx V_1(b)\bigg(\int_a^b w_1\bigg)^{\frac{1}{q}}\notag\\
 & <\infty.\label{est_B2}
\end{align}
Setting $f(x)= v_1(x)^{1-p'}$ for $x\in (a,b)$ and plugging this $f$ into~\eqref{E:hardy-new}, one obtains
\begin{equation}\label{E:new-1}
\left(\int_a^b V_1(x)^{qp'} w_1(x) \, dx\right)^{\frac{1}{q}} \leq C V_1(b)^{\frac{p'}{p}}.
\end{equation}
On the other hand, testing inequality \eqref{E:hardy-new} with
\begin{equation*}
f(x) = \left(\int_x^b \left(\int_a^t V_1^{q(\varepsilon+1)} w_1\right)^{\frac{r}{q}} V_1(t)^{-p'} d\left[-V_1(t)^{-\varepsilon r}\right] \right)^{\frac{1}{p}} v_1(x)^{1-p'}, \quad x\in (a,b),
\end{equation*}
we have, by Fubini's theorem,
\begin{align*} 
\left(\int_a^b f^p v_1\right)^{\frac{1}{p}} = \left(\int_a^b \left(\int_a^t V_1^{q(\varepsilon+1)} w_1\right)^{\frac{r}{q}} d\left[-V_1(t)^{-\varepsilon r}\right] \right)^\frac{1}{p}
\le \widetilde{B}_{2,\varepsilon}^{\frac{r}{p}}.
\end{align*}
Moreover, using monotonicity, we obtain
\begin{align*}
LHS_{\eqref{E:hardy-new}} 
& \geq 
\left( \int_a^b \left(\int_x^b 
\left(\int_a^t V_1^{q(\varepsilon+1)} w_1\right)^{\frac{r}{q}} V_1(t)^{-p'} d\left[-V_1(t)^{-\varepsilon r}\right] \right)^{\frac{q}{p}} V_1(x)^{q p'} w_1(x) dx \right)^{\frac{1}{q}} \nonumber\\
& \geq 
\left( \int_a^b  \left(\int_a^x V_1^{q(\varepsilon+1)} w_1\right)^{\frac{r}{p}} 
\left(\int_x^b  V_1(t)^{-p'}  d\left[-V_1(t)^{-\varepsilon r}\right] \right)^{\frac{q}{p}} V_1(x)^{qp'} w_1(x) dx \right)^{\frac{1}{q}} \nonumber\\
& \approx  \left( \int_a^b  
\left(\int_a^x V_1^{q(\varepsilon+1)} w_1\right)^{\frac{r}{p}} 
\left(V_1(x)^{-p'-\varepsilon r}-V_1(b)^{-p'-\varepsilon r} \right)^{\frac{q}{p}} V_1(x)^{qp'} w_1(x)  \,dx \right)^{\frac{1}{q}}.
\label{E:i-v}
\end{align*}
Thus, owing to~\eqref{E:hardy-new}, we get
\begin{equation}\label{E:new-2}
\left( \int_a^b  
\left(\int_a^x V_1^{q(\varepsilon+1)} w_1\right)^{\frac{r}{p}} 
\left(V_1(x)^{-p'-\varepsilon r}-V_1(b)^{-p'-\varepsilon r} \right)^{\frac{q}{p}} V_1(x)^{qp'} w_1(x)  \,dx \right)^{\frac{1}{q}} \lesssim C \widetilde{B}_{2,\varepsilon}^{\frac{r}{p}}.
\end{equation}
Note that $\widetilde{B}_{2,\varepsilon}$ is finite (which in fact was the principal reason for introducing $v_1$ and $w_1$ in the first place). Therefore, in view of \eqref{E:new-2} and \eqref{E:new-1}, one can write
\begin{align*}
\widetilde{B}_{1,\varepsilon}^{\frac{r}{q}}
&=\left( \int_a^b  \left(\int_a^x V_1^{q(\varepsilon+1)} w_1 \right)^{\frac{r}{p}} V_1(x)^{-\varepsilon r} V_1(x)^{q(\varepsilon+1)} w_1(x) dx \right)^{\frac{1}{q}}
    \\
&\approx 
\left( \int_a^b \left(\int_a^x V_1^{q(\varepsilon+1)} w_1\right)^{\frac{r}{p}} \left(V_1(x)^{-p'-\varepsilon r}-V_1(b)^{-p'-\varepsilon r} \right)^{\frac{q}{p}} V_1(x)^{qp'}w_1(x) dx \right)^{\frac{1}{q}}  
    \\
& +  \left( \int_a^b \left(\int_a^x V_1^{q(\varepsilon+1)} w\right)^{\frac{r}{p}} V_1(b)^{(-p'-\varepsilon r)\frac{q}{p}} V_1(x)^{qp'}w_1(x) dx \right)^{\frac{1}{q}}  \\
& \lesssim C \widetilde{B}_{2,\varepsilon}^{\frac{r}{p}} + \left(\int_a^b V_1^{q(\varepsilon+1)} w_1\right)^{\frac{r}{pq}} V_1(b)^{(-p'-\varepsilon r)\frac{1}{p}}\left( \int_a^b   V_1(x)^{qp'}w_1(x) dx \right)^{\frac{1}{q}} \\
& \lesssim C \widetilde{B}_{2,\varepsilon}^{\frac{r}{p}} + C \left(\int_a^b V_1^{q(\varepsilon+1)} w_1\right)^{\frac{r}{pq}} V_1(b)^{(-p'-\varepsilon r)\frac{1}{p}} V_1(b)^{\frac{p'}{p}} \\
&\lesssim  C \widetilde{B}_{2,\varepsilon}^{\frac{r}{p}}, 
\end{align*}
in which the multiplicative constants depend only on $p, \, q$ and $\varepsilon$. Since $\widetilde{B}_{1,\varepsilon}\approx \widetilde{B}_{2,\varepsilon}$ owing to integration by parts, we obtain 
\begin{equation*}
    \widetilde{B}_{1,\varepsilon}=\widetilde{B}_{1,\varepsilon}^{\frac rq-\frac rp} \lesssim C.
\end{equation*}
By approximating $v$ with an almost everywhere pointwise decreasing sequence and $w$ with an almost everywhere pointwise increasing one, and applying the Monotone Convergence Theorem, we arrive, owing to the fact that the mulciplicative constants of the above estimates do not depend on weights involved, at the estimate
\begin{equation*}
    {B}_{1,\varepsilon} \lesssim C,
\end{equation*}
which yields the desired result, namely that ${B}_{1,\varepsilon}<\infty$.

Finally, let $p=1$. Let $0 < w_1 \le w$ and $0 < v \le v_1$ be such that $\int_a^b w_1 <\infty$ and $\esssup\limits_{s\in (a,b)}\frac{1}{v_1(s)}<\infty$, set 
\begin{equation} V_1(t):=\esssup\limits_{s\in (a,t)}\frac{1}{v_1(s)},
\end{equation}
and let $\widetilde{B}_{1,\varepsilon}$ and $\widetilde{B}_{2,\varepsilon}$ be given by~\eqref{B-1-tilde} and~\eqref{B-2-tilde}, with $p=1$, respectively. By estimate  \eqref{est_B2}  we have $\widetilde{B}_{2,\varepsilon}<\infty$.  Then
\begin{equation}\label{E:hardy-new_p=1}
\left(\int_a^b \left(\int_a^{t}f\right)^{q} w_1(t) \, dt\right)^{\frac{1}{q}} \leq C\int_a^b fv_1
\end{equation}
holds for all $f \in \mathcal{M}^+(a, b)$. 
Fix $\sigma>1$ and define
\begin{equation*}
    E_k = \{t\in(a,b) : \sigma^k<V_1(t)\le \sigma^{k+1}\} \quad \text{for $k\in\Z$.}
\end{equation*}
Set $\mathbb A = \{k\in\Z : |E_k|>0\}$.  It clear that $A$ is bounded from above. Then  $(a,b)=\bigcup_{k\in\mathbb A}E_k$, in which, once again, the equality means up to a set of measure zero, in which the union is disjoint and each $E_k$ is a nondegenerate interval (which could be either open or closed at each end) with endpoints $a_k$ and $b_k$, $a_k<b_k$. For every $k\in\mathbb A$, we find $\delta_k > 0$ so that $a_k+\delta_k<b_{k}$ and
\begin{align}
\int_{a_k}^{b_k}&\left(\int_a^tV_1^{q(\varepsilon +1)}w\right)^{\frac{q}{1-q}}
        V_1(t)^{-\frac{\varepsilon q}{1-q}+q(\varepsilon +1)}w_1(t)\,dt\notag \\
        &\le \sigma \int_{a_k+\delta_k}^{b_k} \left(\int_a^tV_1^{q(\varepsilon +1)}w_1\right)^{\frac{q}{1-q}}
        V_1(t)^{-\frac{\varepsilon q}{1-q}+q(\varepsilon +1)}w_1(t)\,dt,
 \label{E:small-epsilons}   
\end{align}
which is clearly possible, and then we define the set
\begin{equation*}
    G_k = \left\{t\in(a_k, a_k+\delta_k) : \frac{1}{v_1(t)} >\sigma^{k}\right\}.
\end{equation*}
Since $V_1$ is non-decreasing and left-continuous, $|G_k|>0$ for every $k\in\mathbb A$. Set $h = \sum_{k\in\mathbb A} \frac{\chi_{_{G_k}}}{|G_k|}$. 
Define  
\[
    f(y)=h(y) V_1(y)^{\varepsilon +1}\left(\int_y^b\left(\int_a^xV_1^{q(\varepsilon +1)}w_1\right)^{\frac{1}{1-q}}
    \, d\left[-V_1(x)^{-\frac{\varepsilon }{1-q}} \right]\right), \quad y\in (a,b) .
\]
        
By using Fubini's theorem, we have  
\begin{align} 
    \int_{a}^{b} fv_1 
    &=
    \int_{a}^{b}  h(t) V_1(t)^{\varepsilon +1}\left(\int_t^b\left(\int_a^xV_1^{q(\varepsilon +1)}w_1\right)^{\frac{1}{1-q}}
            \, d\left[-V_1(x)^{-\frac{\varepsilon }{1-q}} \right]\right)v_1(t)\,dt\notag
            \\
                  & =
    \int_{a}^{b}  \left(\int_a^xV_1^{q(\varepsilon +1)}w_1\right)^{\frac{1}{1-q}}\int_a^x h(t) V_1(t)^{\varepsilon +1}v_1(t)\,dt
            \, d\left[-V_1(x)^{-\frac{\varepsilon }{1-q}} \right].\label{leftEst}
\end{align}

It is clear that if $x\in E_k$, then     
\begin{equation*}
    \int_{a}^{x}hV_1^{\varepsilon +1} v_1 \lesssim \sum_{i=-\infty}^k\frac{1}{|G_i|}\int_{G_i}V_1^{\varepsilon +1}v_1 \lesssim \sum_{i=-\infty}^k\sigma^{i \varepsilon}\approx  \sigma^{k \varepsilon} \approx V_1(x)^{\varepsilon}.
\end{equation*}
As $(a,b)=\bigcup_{k\in\mathbb A}E_k$, up to a set of measure zero, in which the union is disjoint,  
\begin{equation}\label{E:estimate-of-h}
    \int_{a}^{x}hV_1^{\varepsilon +1} v_1 \lesssim V_1(x)^{\varepsilon}, \quad \text{for every}\quad x\in (a,b).
    \end{equation}
If $x\in E_k$, we also have 
    \begin{equation} \label{E:revestimate-of-h}V_1(x)^{\varepsilon+1}\approx  \sigma^{k( \varepsilon+1)}\approx
    \sum_{i=-\infty}^{k-1}\sigma^{i (\varepsilon +1)} \approx\sum_{i=-\infty}^{k-1}\frac{1}{|G_i|}\int_{G_i}V_1^{\varepsilon +1}\lesssim
    \int_{a}^{x}hV_1^{\varepsilon +1}.
\end{equation}
Moreover,
\begin{equation}\label{E:upper-estimate-of-integral}
    \int_{t}^{b}V_1(s)^{\varepsilon}\, d\left[-V_1(s)^{-\frac{\varepsilon }{1-q}} \right]\approx\int_{t}^{b}\,d\left[-V_1(s)^{-\frac{\varepsilon q}{1-q}} \right]\quad\text{for}\quad  t\in(a,b).
\end{equation}
Since $\left(\int_a^tV_1^{q(\varepsilon +1)}w_1\right)^{\frac{1}{1-q}}$ is non-decreasing, we can apply the variant of Hardy's lemma noticed in~\cite{Hei:93} (whose version for Lebesgue integrals can be found in~\cite[Chapter~2, Proposition~3.6]{Ben:88} - note that the proof presented there works verbatim for Lebesgue--Stieltjes integrals) to~\eqref{E:upper-estimate-of-integral} and get
\begin{align}
    \int_{a}^{b}V_1(t)^{\varepsilon }& \left(\int_a^tV_1^{q(\varepsilon +1)}w_1\right)^{\frac{1}{1-q}}\,d\left[-V_1(t)^{-\frac{\varepsilon }{1-q}}\right]\nonumber
        \\
    &\qquad\lesssim \int_{a}^{b}\left(\int_a^tV_1^{q(\varepsilon +1)}w_1\right)^{\frac{1}{1-q}}\,d\left[-V_1(t)^{-\frac{\varepsilon q}{1-q}}\right].
    \label{E:hardy-lemma-estimate}
\end{align}
Using \eqref{leftEst}, \eqref{E:estimate-of-h} and \eqref{E:hardy-lemma-estimate} we obtain 
\begin{align*}
\int_{a}^{b} fv_1 &\lesssim \int_{a}^{b}   V_1(x)^{\varepsilon }\left(\int_a^xV_1^{q(\varepsilon +1)}w_1\right)^{\frac{1}{1-q}}
        \, d\left[-V_1(x)^{-\frac{\varepsilon }{1-q}} \right]\\
                     &\lesssim \int_{a}^{b}   \left(\int_a^xV_1^{q(\varepsilon +1)}w_1\right)^{\frac{1}{1-q}}
        \, d\left[-V_1(x)^{-\frac{\varepsilon q}{1-q}} \right]
            \\
        &\le\widetilde{B}_{2,\varepsilon}^{r}.
\end{align*}      
Therefore, we have 
     \begin{equation}\label{E:fw<}
         \int_{a}^{b} fv_1\lesssim \widetilde{B}_{2,\varepsilon}^{r}.
     \end{equation}       
Using the decomposition of $(a,b)$ into $\bigcup_{k\in\mathbb A}E_k$, the definition of $E_k$, the fact that each $E_k$ is an interval with endpoints $a_k$, $b_k$, and  \eqref{E:small-epsilons}, we get          
\begin{align*}
  \widetilde{B}_{1,\varepsilon}^r  
    & =
    \int_{a}^{b} V_1(t)^{-\frac{\varepsilon q}{1-q}} \left(\int_a^tV_1^{q(\varepsilon +1)}w_1\right)^{\frac{q}{1-q}}
        V_1(t)^{q(\varepsilon +1)}w_1(t)\,dt
            \\
    &=\sum_{k\in\mathbb A} 
    \int_{a_k}^{b_k} V_1(t)^{-\frac{\varepsilon q}{1-q}} \left(\int_a^tV_1^{q(\varepsilon +1)}w_1\right)^{\frac{q}{1-q}}
       V_1(t)^{q(\varepsilon +1)}w_1(t)\,dt
            \\
    &\lesssim \sum_{k\in\mathbb A} 
    \int_{a_k+\delta_k}^{b_k} V_1(t)^{-\frac{\varepsilon q}{1-q}} \left(\int_a^tV_1^{q(\varepsilon +1)}w_1\right)^{\frac{q}{1-q}}
        V_1(t)^{q(\varepsilon +1)}w_1(t)\,dt
        \\
     &\lesssim \sum_{k\in\mathbb A} 
    \int_{a_k+\delta_k}^{b_k}\left(\int_a^tV_1^{q(\varepsilon +1)}w_1\right)^{\frac{q}{1-q}}
        \left(V_1(t)^{-\frac{\varepsilon }{1-q}}-V_1(b)^{-\frac{\varepsilon }{1-q}} \right)^q V_1(t)^{q(\varepsilon +1)}w_1(t)\,dt
        \\
     & +\sum_{k\in\mathbb A} 
\int_{a_k+\delta_k}^{b_k}\left(\int_a^tV_1^{q(\varepsilon +1)}w_1\right)^{\frac{q}{1-q}}
        V_1(b)^{-\frac{\varepsilon q}{1-q}}V_1(t)^{q(\varepsilon +1)}w_1(t)\,dt.
\end{align*}
Thus, using \eqref{E:revestimate-of-h}, \eqref{E:estimate-of-h}, \eqref{E:hardy-new_p=1} and the monotonicity of functions given by integrals, we obtain  
\begin{align*}
    \widetilde{B}_{1,\varepsilon}^r  &
                  \lesssim \sum_{k\in\mathbb A} 
\int_{a_k+\delta_k}^{b_k} \left(\int_a^t h V_1^{\varepsilon +1} \right)^q \left(\int_a^tV_1^{q(\varepsilon +1)}w_1\right)^{\frac{q}{1-q}}\left(\int_t^b 
        \, d\left[-V_1(x)^{-\frac{\varepsilon }{1-q}} \right]\right)^q w_1(t)\,dt
        \\
          & +\sum_{k\in\mathbb A} 
\left(\int_a^bV_1^{q(\varepsilon +1)}w_1\right)^{\frac{q}{1-q}}
        V_1(b)^{-\frac{\varepsilon q}{1-q}} \int_{a_k}^{b_k}\left(\int_a^t hV_1^{\varepsilon +1}\right)^q w_1(t)\,dt
        \\
       &\lesssim 
       \sum_{k\in\mathbb A} 
\int_{a_k+\delta_k}^{b_k} \left(\int_a^t h(y) V_1(y)^{\varepsilon +1}\left(\int_y^b\left(\int_a^xV_1^{q(\varepsilon +1)}w_1\right)^{\frac{1}{1-q}}
        \, d\left[-V_1(x)^{-\frac{\varepsilon }{1-q}} \right]\right)\,dy\right)^qw_1(t)\,dt
         \\
         & +  \left(\int_a^bV_1^{q(\varepsilon +1)}w_1\right)^{\frac{q}{1-q}}
        V_1(b)^{-\frac{\varepsilon q}{1-q}}\int_{a}^b \left(\int_a^t hV_1^{\varepsilon +1}\right)^qw_1(t)\,dt  
        \\
         &\lesssim \sum_{k\in\mathbb A} 
\int_{a_k}^{b_k} \left(\int_a^t h(y) V_1(y)^{\varepsilon +1}\left(\int_y^b\left(\int_a^xV_1^{q(\varepsilon +1)}w_1\right)^{\frac{1}{1-q}}
        \, d\left[-V_1(x)^{-\frac{\varepsilon }{1-q}} \right]\right)\,dy\right)^qw_1(t)\,dt
        \\
          & +C^q\left(\int_a^bV_1^{q(\varepsilon +1)}w_1\right)^{\frac{q}{1-q}}
        V_1(b)^{-\frac{\varepsilon q}{1-q}}\left(\int_{a}^b hV_1^{\varepsilon+1}v_1\right)^{q}
        \\  
        &\lesssim
\int_{a}^{b} \left(\int_a^t h(y) V_1(y)^{\varepsilon +1}\left(\int_y^b\left(\int_a^xV_1^{q(\varepsilon +1)}w_1\right)^{\frac{1}{1-q}}
        \, d\left[-V_1(x)^{-\frac{\varepsilon }{1-q}} \right]\right)\,dy\right)^qw_1(t)\,dt
        \\
        & +C^q\left(\int_a^bV_1^{q(\varepsilon +1)}w_1\right)^{\frac{q}{1-q}}
        V_1(b)^{-\frac{\varepsilon q}{1-q}}\left(\int_{a}^b hV_1^{\varepsilon+1}v_1\right)^{q}
        \\
                 &= \int_{a}^{b} \left(\int_a^t f\right)^qw_1(t)\,dt
 + C^q\left(\int_a^bV_1^{q(\varepsilon +1)}w_1\right)^{\frac{q}{1-q}}
        V_1(b)^{-\frac{\varepsilon q^2}{1-q}}.
\end{align*}
Consequently, by \eqref{E:hardy-new_p=1} and \eqref{E:fw<}, we finally arrive at
\begin{align*}
\widetilde{B}_{1,\varepsilon}^r  
& \lesssim C^q\left(\int_{a}^{b}  fv_1\right)^{q}
 + C^q \left(\int_a^bV_1^{q(\varepsilon +1)}w_1\right)^{\frac{q}{1-q}}V_1(b)^{-\frac{\varepsilon q^2}{1-q}}\lesssim C^q\widetilde{B}_{2,\varepsilon}^{rq},
\end{align*}
in which the multiplicative constants depend only on $q$ and $\varepsilon$. 
Since $\widetilde{B}_{1,\varepsilon}\approx \widetilde{B}_{2,\varepsilon}$ owing to integration by parts, dividing both sides of the latter estimate by $\widetilde{B}_{2,\varepsilon}^{rq}$, using the fact that $r-rq=q$, and taking the $q$-th roots, we finally obtain $\widetilde{B}_{1,\varepsilon} \lesssim C$. Once again, approximating $v$ with an almost everywhere pointwise decreasing sequence and $w$ with an almost everywhere pointwise increasing one, using the fact that the constants in estimates do not depend on weights, and applying the Monotone Convergence Theorem, we establish that $B_{1,\varepsilon}\lesssim C$, which in turn yields $B_{i,\varepsilon}\lesssim C$, $i=1,2$. The proof is complete.

\end{proof}

\bibliographystyle{abbrv}

\vskip+1cm

\end{document}